\newtheorem{theorem}{Theorem}
\newtheorem{proposition}[theorem]{Proposition}
\newtheorem{lemma}[theorem]{Lemma}
\newtheorem{corollary}[theorem]{Corollary}
\newtheorem{claim}[theorem]{Claim}
\theoremstyle{remark}
\newtheorem{rem}{Remark}
\DeclareMathOperator{\interior}{int}
\DeclareMathOperator{\cl}{cl}
\newenvironment{rlist}{\begin{list}%
    {\hbox to 0pt{\hss\rm(\roman{enumi})}\unskip\ignorespaces}%
    {\usecounter{enumi}}}%
  {\end{list}}
\def\isparam#1{\let\nextp#1\futurelet\next\isnextbrace}
\def\isnextbrace{\ifcat\bgroup\noexpand\next
  \let\next\nextp
  \else
  \let\next\relax
  \fi
  \next}
\def\real{\mathbb{R}}
\def\R{\mathbb{R}}
\def\set#1#2{\cbr{#1\,:\,#2}}
\def\delimlr#1#2#3{\left#1#2\right#3}
\def\cbr#1{\delimlr\{{#1}\}}
\let\eps\varepsilon
\let\phi\varphi
\def\lset#1{\cbr{#1}}
\title{The Fixed Point of the Composition of Derivatives
\thanks{This research was completed during the Workshop on
Geometric and Dynamical aspects of 
Measure Theory in R\'evf\"ul\"op, which was supported by the Erd\H os Center.}}
\author{M\'arton Elekes, Department of Analysis, E\"otv\"os Lor\'and 
 University, Kecskem\'eti u. 10-12, Budapest, 1053, Hungary;
e-mail: {\tt emarci@cs.elte.hu}
\and Tam\'as Keleti,
\thanks{Partially supported by OTKA grant F 029768} 
Department of Analysis, E\"otv\"os Lor\'and University, 
 Kecskem\'eti u. 10-12, Budapest, 1053, Hungary; e-mail: {\tt elek@cs.elte.hu}
\and 
Vilmos Prokaj,\thanks{Partially supported by OTKA grant F k\'es\H
  obb megmondom a sz\'am\'at}  Department of Probability and Statistics,
 E\"otv\"os Lor\'and University, Kecskem\'eti u. 10-12, Budapest,
 1053,  Hungary;
 e-mail: {\tt prokaj@cs.elte.hu}}
\keywords{fixed point derivative, composition, gradient, level set, connected}
\begin{document}
\maketitle

\begin{abstract}
  We give an affirmative answer to a question of K. Ciesielski by showing
  that the composition $f\circ g$ of two derivatives $f,g:[0,1]\to[0,1]$
  always has a fixed point. Using Maximoff's theorem we obtain that the
  composition of two $[0,1]\to[0,1]$ Darboux Baire-1 functions must also 
  have a fixed point.
\end{abstract}

\section*{Introduction} 
In  \cite{GN} R. Gibson and T. Natkaniec mentioned the question of
K. Ciesielski, which asks whether the composition $f\circ g$ of two 
derivatives $f,g:[0,1]\to[0,1]$ must always have a fixed point.
Our main result is an affirmative answer to this question.
(An alternative proof has also been found by M. Cs\"ornyei, 
T. C. O'Neil and D. Preiss, see \cite{CNP}.)

\begin{theorem}\label{thm:fo}
Let  $f$ and $g:[0,1]\to[0,1]$ be derivatives. Then $f\circ g$ has a fixed
point.
\end{theorem}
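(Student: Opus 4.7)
Suppose for contradiction that $\phi := f \circ g$ has no fixed point. Then $[0,1]$ partitions into $A = \set{x}{\phi(x) > x}$ and $B = \set{x}{\phi(x) < x}$, with $0 \in A$ and $1 \in B$. Set $c = \sup A$; by symmetry we may assume $c \in A$, so $\phi(c) > c$ while $\phi(x) < x$ for every $x \in (c, 1]$. The plan is to produce a sequence $x_n \to c^+$ with $\phi(x_n) \to \phi(c)$, since this would yield the contradiction
\[
\phi(c) \;=\; \lim_{n\to\infty} \phi(x_n) \;\le\; \lim_{n\to\infty} x_n \;=\; c \;<\; \phi(c).
\]

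If $\phi$ happened to be continuous at $c$, any sequence $x_n \to c^+$ would work, but in general $\phi$ need not be continuous anywhere. However $g$, being a derivative, is Baire-1 with strong bilateral approximation, so there exist sequences $x_n \to c^+$ along which $g(x_n) \to g(c)$. To get also $f(g(x_n)) \to f(g(c))$ one must exploit that $f$ is also a derivative, which is substantially stronger than Darboux Baire-1: by the Denjoy--Clarkson theorem the $f$-preimage of any nonempty open set has positive measure, and by Zahorski-type results every point $t$ is approached by sequences of continuity points of $f$ along which $f$ converges to $f(t)$. I would try to use these properties to select, among the candidates $x_n \to c^+$ with $g(x_n) \to g(c)$, a subsequence on which $g(x_n)$ lies in small neighborhoods of $g(c)$ where $f$ is also well controlled.

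The main obstacle is precisely this simultaneous approximation step: coordinating the two Baire-1 approximations ``$g(x_n) \to g(c)$ as $x_n \to c^+$'' and ``$f(t_n) \to f(g(c))$ at $t_n = g(x_n)$'' into a single sequence seems to require finer measure-theoretic properties of derivatives rather than just Darboux plus Baire-1. I would expect the decisive argument to combine Denjoy--Clarkson positivity for $g$ (so that the $g$-preimage of small neighborhoods of $g(c)$ meets $(c,c+\delta)$ in positive measure for every $\delta > 0$) with the Zahorski-type structure of $f$ near $g(c)$, together with the fact that $B$ must occupy a ``large'' portion of a right-neighborhood of $c$, in order to catch a sequence $x_n \in B$ that simultaneously approaches $c$ from the right and satisfies $f(g(x_n)) \to f(g(c))$. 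A symmetric argument would handle the case $c \in B$, completing the contradiction.
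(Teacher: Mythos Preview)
Your proposal is a plan with an explicitly acknowledged gap, not a proof: you yourself flag ``the main obstacle'' as the simultaneous approximation step and then do not carry it out. That gap is genuine. The composition $f\circ g$ of two derivatives need not be Darboux and need not be Baire-1, so there is no general reason why a point $c$ should admit a one-sided sequence $x_n\to c$ with $f(g(x_n))\to f(g(c))$. Approximate continuity of $g$ gives plenty of $x_n\to c^+$ with $g(x_n)\to g(c)$, but you have no control over \emph{how} $g(x_n)$ approaches $g(c)$; nothing forces these values to lie in a set along which $f$ is well behaved near $g(c)$. Denjoy--Clarkson and Zahorski give information about $f$ and $g$ separately; they do not couple the two approximations, and your sketch does not indicate any mechanism that would. (Also, in your case $c\in A$ every $x>c$ already lies in $B$, so the clause about $B$ occupying a large portion of a right neighborhood is vacuous and cannot be the missing ingredient.)

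The paper avoids the composition entirely by passing to two variables. Set
\[
H(x,y)=F(x)+G(y)-xy,\qquad F'=f,\ G'=g,
\]
so that $\nabla H(x,y)=(f(x)-y,\,g(y)-x)$ vanishes exactly when $y=f(x)$ and $x=g(y)$, i.e.\ when $y$ is a fixed point of $f\circ g$. Assuming $\nabla H\neq 0$ everywhere, one studies the level sets of $H$ on $Q=[-1,2]^2$: monotonicity of $H$ along the edges forces a saddle-type level $\{H=c\}$ that cuts $Q$ into exactly four pieces, one per corner; a topological (Zoretti-type) argument then produces an interior point in the closure of at least three components of $\{H\neq c\}$. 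A separate local lemma, using only the special form $F(x)+G(y)-xy$, shows that near every interior point of $\{H=c\}$ at most two components can meet. These two conclusions contradict each other. The decisive idea you are missing is to trade ``fixed point of $f\circ g$'' for ``critical point of $H$'', turning a one-dimensional sequential-approximation problem into planar topology of level sets.
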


Since any bounded approximately continuous function is a derivative
(see e.g. \cite{Br}) we get the following:

\begin{corollary}\label{appr}
Let  $f$ and $g:[0,1]\to[0,1]$ be approximately continuous functions. 
Then $f\circ g$ has a fixed point.
\end{corollary}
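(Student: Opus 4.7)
The plan is to deduce Corollary \ref{appr} as an immediate consequence of Theorem \ref{thm:fo}, with essentially no additional work beyond checking hypotheses. The classical fact cited in the sentence preceding the corollary (see Bruckner \cite{Br}) is that every bounded approximately continuous function $h : [0,1] \to \mathbb{R}$ is a derivative, i.e.\ there exists a differentiable $H : [0,1] \to \mathbb{R}$ with $H' = h$ pointwise on $[0,1]$. Since $f$ and $g$ take values in $[0,1]$, both are bounded, so both are derivatives in this sense.

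With that observation in hand, I would simply invoke Theorem \ref{thm:fo} applied to the pair $(f,g)$ to conclude that $f \circ g$ has a fixed point, and the argument is complete. There is no real obstacle here: all the substantive work is encoded in Theorem \ref{thm:fo} and in the Bruckner result, so the proof of the corollary is genuinely a one-line reduction. If space permits, I would also mention explicitly that approximate continuity of $f, g$ on the closed interval $[0,1]$ (not merely on its interior) is what is needed in order to apply \cite{Br} at every point, ensuring that the antiderivatives are everywhere differentiable with derivative equal to $f$ and $g$ respectively.
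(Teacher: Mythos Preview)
Your proposal is correct and follows exactly the paper's approach: the corollary is obtained in one line from Theorem~\ref{thm:fo} by invoking the classical fact (cited from \cite{Br}) that bounded approximately continuous functions on $[0,1]$ are derivatives. There is nothing to add.
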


Now suppose that $f,g:[0,1]\to[0,1]$ are Darboux Baire-1 functions.
Then, by Maximoff's theorem (\cite{Ma}, see also in \cite{Pr}), 
there exist homeomorphisms 
$h,k:[0,1]\to[0,1]$ such that $f\circ h$ and $g\circ k$ are approximately
continuous functions. Then clearly 
$\tilde f = k^{\textrm{-}1}\circ f\circ h$ and 
$\tilde g = h^{\textrm{-}1}\circ g\circ k$ are also $[0,1]\to[0,1]$
approximately continuous functions and $f\circ g$ has a fixed point if and
only if $\tilde f\circ \tilde g$ has a fixed point. Thus from
Corollary~\ref{appr} we get the following:

\begin{corollary}\label{DB1}
Let $f$ and $g:[0,1]\to[0,1]$ be Darboux Baire-1 functions.  Then $f\circ
g$ has a fixed point.
\end{corollary}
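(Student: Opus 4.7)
The plan is to reduce Corollary~\ref{DB1} to Corollary~\ref{appr} by conjugating away the gap between Darboux Baire-1 and approximately continuous, using Maximoff's theorem. First, I would apply Maximoff's theorem separately to $f$ and to $g$ to obtain self-homeomorphisms $h,k:[0,1]\to[0,1]$ for which $f\circ h$ and $g\circ k$ are approximately continuous functions on $[0,1]$.

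Next, I would set $\tilde f = k^{-1}\circ f\circ h$ and $\tilde g = h^{-1}\circ g\circ k$ and check that both are approximately continuous $[0,1]\to[0,1]$ functions. The only thing to observe is that outer composition with a continuous function preserves approximate continuity: if $E$ has density $1$ at $x$ and $\phi|_E$ is continuous at $x$, then for any continuous $\psi$ the restriction $(\psi\circ\phi)|_E$ is continuous at $x$ as well. Since $k^{-1}$ and $h^{-1}$ are continuous and $f\circ h$, $g\circ k$ are approximately continuous, this gives approximate continuity of $\tilde f$ and $\tilde g$. Applying Corollary~\ref{appr} to $\tilde f,\tilde g$ then yields some $x_0\in[0,1]$ with $\tilde f(\tilde g(x_0))=x_0$.

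Finally, I would use the identity
\[
\tilde f\circ \tilde g \;=\; k^{-1}\circ f\circ h\circ h^{-1}\circ g\circ k \;=\; k^{-1}\circ(f\circ g)\circ k,
\]
which exhibits $\tilde f\circ \tilde g$ as the conjugate of $f\circ g$ by $k$. Consequently $(f\circ g)(k(x_0))=k(x_0)$, so $k(x_0)$ is the desired fixed point of $f\circ g$. The whole argument is essentially change-of-coordinates bookkeeping on top of Maximoff's theorem and Corollary~\ref{appr}; no step is genuinely hard, and the only mild verification is the preservation-of-approximate-continuity remark above.
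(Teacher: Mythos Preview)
Your proof is correct and follows essentially the same route as the paper: apply Maximoff's theorem to obtain homeomorphisms $h,k$ with $f\circ h$ and $g\circ k$ approximately continuous, set $\tilde f=k^{-1}\circ f\circ h$, $\tilde g=h^{-1}\circ g\circ k$, observe that $\tilde f\circ\tilde g$ is a conjugate of $f\circ g$, and invoke Corollary~\ref{appr}. The only addition is your explicit verification that post-composition by a continuous map preserves approximate continuity, which the paper leaves implicit.
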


\section*{Preliminaries}

We shall denote the square $[\textrm{-}1,2]\times[\textrm{-}1,2]$ by
$Q$. The partial derivatives of a function $h:\R^2\to\R$ with respect to
the $i$-th variable ($i=1,2$) will be denoted by $\partial_i h$. The sets
$\{h=c\}$, $\{h\neq c\}$, $\{h>c\}$, etc.(???) will denote the appropriate
level sets of the function $h$.  The notations $\cl A, \interior A$ and
$\partial A$ stand for the closure, interior and boundary of a set $A$,
respectively. By \emph{component} we shall always mean connected component.

\section{Skeleton of the proof of Theorem~\ref{thm:fo}}\label{skeleton}

Let $f,g:[0,1]\to[0,1]$ be the derivatives of $F$ and $G$, respectively. We
can extend $F$ and $G$ linearly to the complement of the unit interval such
that they remain differentiable and such that the derivatives of the
extended functions are still between 0 and 1. Therefore we may assume that
$f$ and $g$ are derivatives defined on the whole real line and $0\le f,g\le
1$.

The key step of the proof is the following. Let
\begin{equation}\label{formula}
H(x,y)=F(x)+G(y)-xy\ \ \ (\ (x,y) \in\real^2\ ),
\end{equation}
\begin{center}
$\textrm{where } F'=f,\ G'=g:\real\to [0,1] \textrm{ are derivatives}.$
\end{center}
Then $H$ is differentiable as a function of two variables, and its gradient
is
\begin{equation}\label{gradient}
H'(x,y) = (f(x)-y,g(y)-x).
\end{equation} 
Thus the gradient vanishes at $(x_0,y_0)$ if and only if $x_0$ is a fixed
point of $g\circ f$ and $y_0$ is a fixed point of $f\circ g$. Moreover, the
gradient cannot be zero outside the closed unit square, since $0\le f,g\le
1$. Therefore it is enough to prove that there exists a point in $\real^2$
where the gradient vanishes.

We argue by contradiction, so throughout the paper we shall suppose that
the gradient of $H$ nowhere vanishes.

Let us now examine the behavior of $H$ on the edges of $Q$. By
(\ref{gradient}) the partial derivative $\partial_1 H$ is clearly negative
on the top edge of $Q$ and positive on the bottom edge, and similarly
$\partial_2 H$ is positive on the left edge and negative on the right
edge. This implies the following.

\begin{lemma}\label{monotone}
$H$ is strictly decreasing on the top and right edges, and strictly
increasing on the bottom and left edges of $Q$.
\end{lemma}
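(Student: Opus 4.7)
\medskip

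\noindent\textbf{Proof plan.} The plan is to restrict $H$ to each of the four edges of $Q$ and turn each restriction into a function of one real variable whose derivative has a strict sign. Once this is done, the classical one-variable mean value theorem immediately gives the strict monotonicity asserted in the lemma.

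First I would parametrize the edges. On the top edge set $\phi(x)=H(x,2)=F(x)+G(2)-2x$ for $x\in[-1,2]$; since $F$ is differentiable on $\R$ with $F'=f$, the function $\phi$ is differentiable and $\phi'(x)=f(x)-2$. Because $0\le f\le 1$, we have $\phi'(x)\le -1<0$ everywhere on $[-1,2]$, so $\phi$ is strictly decreasing. The right edge is handled symmetrically: for $\psi(y)=H(2,y)=F(2)+G(y)-2y$ we compute $\psi'(y)=g(y)-2\le -1<0$, so $\psi$ is strictly decreasing on $[-1,2]$.

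For the bottom and left edges the sign flips. Setting $\phi(x)=H(x,-1)=F(x)+G(-1)+x$ gives $\phi'(x)=f(x)+1\ge 1>0$, hence $\phi$ is strictly increasing on $[-1,2]$; and $\psi(y)=H(-1,y)=F(-1)+G(y)+y$ yields $\psi'(y)=g(y)+1\ge 1>0$, so $\psi$ is strictly increasing on $[-1,2]$. In each case the conclusion follows from the elementary fact that a function whose derivative is everywhere $\le -1$ (resp.\ $\ge 1$) on an interval is strictly decreasing (resp.\ increasing) there.

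There is essentially no obstacle here: the lemma is a direct consequence of the explicit formula \eqref{gradient} for the gradient and the assumption $0\le f,g\le 1$, which forces $\partial_1H$ to be bounded away from zero with a definite sign on the horizontal edges, and similarly for $\partial_2H$ on the vertical edges. The only point worth double-checking is that we have genuine differentiability (not just the existence of partial derivatives), but this is automatic since $F$ and $G$ are differentiable on all of $\R$ and the map $xy$ is smooth.
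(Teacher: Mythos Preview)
Your proof is correct and follows exactly the same approach as the paper: the paper simply observes, just before stating the lemma, that by the gradient formula $\partial_1 H$ is negative on the top edge and positive on the bottom edge, while $\partial_2 H$ is positive on the left edge and negative on the right edge, and declares the lemma an immediate consequence. You have merely written out this observation in full detail.
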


Consequently,
\[
\max\left(H(\textrm{-}1,\textrm{-}1),H(2,2)\right)<
\min\left(H(\textrm{-}1,2), H(2,\textrm{-}1)\right).
\]
This inequality suggests that there must be a kind of `saddle point' in
$Q$. Therefore define
\[
c=\inf\set{d}{(\textrm{-}1,\textrm{-}1) \text{ and } (2,2) \text{ are in
the same component of } \{H\leq d\} \cap Q }
\]
as a candidate for the value of $H$ at a saddle point.
At a typical saddle point $q$ of some smooth function $K:\real^2\to\real$
such that $K(q)=d$, the level set in a neighborhood of $q$ consists of two
smooth curves intersecting each other at $q$. In Section~\ref{proof:c} we
shall prove the following theorem, which states that the level set
$\{H=c\}$ behaves indeed in a similar way.

\begin{theorem}\label{c}
$\{H=c\} \cap Q$ intersects each edge of $Q$ at exactly one point (which is
not an endpoint of the edge). Moreover, this level set cuts $Q$ into four
pieces, that is $\{H\neq c\} \cap Q$ has four components, and each of these
components contains one of the corners of $Q$. The values of $H$ are
greater than $c$ in the components containing the top left and the bottom
right corner of and less than $c$ in the components containing the bottom left
and the top right corner.
\end{theorem}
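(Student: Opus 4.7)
I would argue in three stages. First, place $c$ strictly between $\alpha:=\max(H(\textrm{-}1,\textrm{-}1),H(2,2))$ and $\beta:=\min(H(\textrm{-}1,2),H(2,\textrm{-}1))$. Second, read off the edge-intersection count from Lemma~\ref{monotone}. Third, determine the component structure via a Jordan-arc argument combined with the no-critical-point hypothesis.

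For $\alpha<c<\beta$: each valley corner is a \emph{strict} local minimum of $H$ restricted to $Q$. Indeed at $(\textrm{-}1,\textrm{-}1)$ the gradient $(f(\textrm{-}1)+1,g(\textrm{-}1)+1)$ has both coordinates at least $1$, so its pairing with any nontrivial inward direction (whose coordinates are nonnegative) is strictly positive and dominates the differentiability error for small inward moves. Hence each valley corner is an isolated point of $\{H\leq\alpha\}\cap Q$, which forces $c>\alpha$. Dually each peak corner is a strict local maximum, so in a punctured $Q$-neighborhood of a peak corner attaining the value $\beta$ (say $(\textrm{-}1,2)$) one has $H<\beta$. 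Concatenating the left edge up to a point just short of $(\textrm{-}1,2)$, a small interior detour around $(\textrm{-}1,2)$, and the top edge down to $(2,2)$ exhibits a path from $(\textrm{-}1,\textrm{-}1)$ to $(2,2)$ along which $H<\beta$ throughout, so $c<\beta$. Given $\alpha<c<\beta$, Lemma~\ref{monotone} and the intermediate value theorem immediately yield the first assertion: each edge carries exactly one level-$c$ point, lying strictly between its endpoints.

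For the component count, the pivotal lemma is that every component of $\{H\neq c\}\cap Q$ meets $\partial Q$. Otherwise such a component $C$ would be compactly contained in $\interior Q$, with $H=c$ on its relative boundary in $Q$ and $H$ strictly bounded away from $c$ on $C$; then the continuous function $H|_{\cl C}$ would attain its extreme value at an interior point of $C$, which is a local extremum of $H$ on $\real^2$ at which the gradient must vanish---contradicting our standing assumption. The four level-$c$ points partition $\partial Q$ into four arcs, one around each corner, with $H-c$ of the appropriate sign on each. The two valley corners lie in different components of $\{H<c\}\cap Q$ by the very definition of $c$: a joining path would exhibit a strictly smaller min-max value. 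The two peak corners lie in different components of $\{H>c\}\cap Q$ by a dual Jordan argument: any simple sub-arc of a hypothetical connecting path from $(\textrm{-}1,2)$ to $(2,\textrm{-}1)$ inside $\{H>c\}$ would separate $Q$ into two pieces, one containing each valley corner, forcing every valley-to-valley path to cross $\{H>c\}$ and again violating the definition of $c$. Hence all four corners lie in four distinct components, each of which contains at least one of the four boundary arcs, so there are exactly four components distributed as stated.

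The main obstacle is the non-smoothness of $H$: the gradient $\nabla H$ is merely a bounded derivative-valued field, typically discontinuous, so neither Morse theory nor the standard mountain-pass machinery applies. Differentiability of $H$ alone (without continuity of $\nabla H$) is just enough for the interior-extremum step of the key topological lemma---promoting an interior extreme value of $H|_{\cl C}$ into a zero of the gradient---while the remaining parts of the argument are purely topological.
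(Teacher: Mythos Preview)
Your argument follows the same three-step skeleton as the paper's: pin down $\alpha<c<\beta$, read off the edge count from Lemma~\ref{monotone}, then show every component of $\{H\neq c\}\cap Q$ touches $\partial Q$ and that the four corners lie in four distinct components. The executions differ: the paper builds explicit interior polygons (one that separates the valley corners at level $H(2,2)$, another that joins them below level $H(\textrm{-}1,2)$) and first proves a separate Lemma~\ref{min} that the infimum defining $c$ is actually attained; you instead work locally via the gradient at the corners, which is quicker and lets you bypass Lemma~\ref{min} entirely. Two small repairs are needed. First, ``each valley corner is an isolated point of $\{H\le\alpha\}\cap Q$'' is only literally true for a corner at which $\alpha$ is \emph{attained}---but one such isolated corner already forces $c>\alpha$, so the conclusion stands. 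Second, your final clause ``violating the definition of $c$'' is one beat short: since you never prove the infimum is attained, knowing that every valley-to-valley path crosses your arc only shows $c$ itself fails to belong to the defining set. To finish, observe that the arc is compact inside the open set $\{H>c\}$, hence lies in $\{H>c+\varepsilon\}$ for some $\varepsilon>0$; the separation then persists at level $c+\varepsilon/2>c$, and \emph{that} contradicts the definition of $c$. The paper closes the same loop by invoking Lemma~\ref{min} instead.
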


This theorem suggests the picture that the level set $\{H=c\} \cap Q$ looks
like the union of two arcs crossing each other, one from the left to the
right and one from the top to the bottom.
We are particularly interested in the existence of this `crossing point',
because at such a point four components of $\{H\neq c\} \cap Q$ should
meet. Indeed, we shall prove the following (see
Section~\ref{proof:branching}).

\begin{theorem}\label{branching}
There exists a point $p \in \{H=c\} \cap \interior Q$ that is in the
closure of more than two components of $\{H\neq c\} \cap Q$.
\end{theorem}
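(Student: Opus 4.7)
\medskip\noindent\textbf{Proof plan.}
The plan is to produce the desired branch point as an intersection of two continua joining disjoint pairs of opposite corners of $Q$. Using the infimum definition of $c$, for each $d>c$ there is a continuum $K_d\subset\{H\le d\}\cap Q$ containing $(-1,-1)$ and $(2,2)$; a Hausdorff-subsequential limit as $d\downarrow c$ (supplied by Blaschke's selection theorem and the continuity of $H$) yields a continuum $K\subset\{H\le c\}\cap Q$ still containing both corners. Symmetrically, let
\[
c^{\ast}=\sup\set{d}{(-1,2)\text{ and }(2,-1)\text{ lie in the same component of }\{H\ge d\}\cap Q};
\]
the analogous limit procedure gives a continuum $K'\subset\{H\ge c^{\ast}\}\cap Q$ containing $(-1,2)$ and $(2,-1)$. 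To guarantee $K'\subset\{H\ge c\}$ I would show $c^{\ast}\ge c$ by contradiction: if $c^{\ast}<c$ and $d\in(c^{\ast},c)$, then neither pair of opposite corners is connected in its appropriate level set, yet a planar separation argument---take a separator of $(-1,-1)$ and $(2,2)$ in $\{H>d\}\cap Q$ and extend its endpoints along the edges of $Q$ using the monotonicity in Lemma~\ref{monotone}---produces a continuum in $\{H\ge d\}\cap Q$ joining $(-1,2)$ and $(2,-1)$, contradicting $d>c^{\ast}$.

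Standard planar topology now forces $K\cap K'\neq\emptyset$: the continuum $K$ links two corners that separate the other two along $\partial Q$, so any continuum joining the alternating pair must cross $K$. Every point $p\in K\cap K'$ satisfies $H(p)=c$, and since $\nabla H(p)\neq 0$, the first-order expansion of $H$ at $p$ places $p$ in $\cl\{H<c\}\cap\cl\{H>c\}$, automatically landing in the closure of at least one of $C_{BL}, C_{TR}$ and at least one of $C_{TL}, C_{BR}$ from Theorem~\ref{c}.

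The main obstacle I expect is upgrading from two components to three and ensuring $p\in\interior Q$. The delicate point is that $K$ (or $K'$) could meet $\{H=c\}$ along a non-trivial connected arc near $p$, so that $p$ is not approached through $K$ by points of both $C_{BL}$ and $C_{TR}$. The plan for handling this is to analyse the distribution of the components of $K\setminus\{H=c\}$ inside $C_{BL}\cup C_{TR}$ along $K$ and to exploit connectedness of $K$ to pin down a point of $K\cap\{H=c\}$ that lies in the closure of both $C_{BL}$ and $C_{TR}$ simultaneously; combined with the symmetric statement for $K'$, this would place $p$ in the closure of all four components. The boundary case, should the intersection land on $\partial Q$, is dealt with by invoking Theorem~\ref{c}'s assertion that $\{H=c\}\cap\partial Q$ consists of only four isolated points, together with mild adjustments of $K$ and $K'$ that push an intersection into $\interior Q$.
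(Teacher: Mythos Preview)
Your crossing-continua setup is attractive but the step where you ``combine'' the analyses of $K$ and $K'$ is a non sequitur. From $p\in K\cap K'$ you only obtain that $p$ lies in the closure of \emph{one} component with $H<c$ and \emph{one} with $H>c$---two, not three. Your fix is to use connectedness of $K$ to locate some $q_1\in\cl C_{BL}\cap\cl C_{TR}$, and symmetrically some $q_2\in\cl C_{TL}\cap\cl C_{BR}$ from $K'$; but nothing forces $q_1=q_2$, nor forces either to coincide with your original $p$, so you cannot conclude that a single point lies in four closures. The interior argument is also not an argument: knowing that $\{H=c\}\cap\partial Q$ is four isolated points does not by itself prevent one of those four from lying in $\cl C_{(2,2)}$, and ``mild adjustments of $K$ and $K'$'' does not explain how to push an intersection off $\partial Q$.

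The paper takes the direct route that is hiding inside your obstacle section and discards $K'$ entirely. Since $K\subset\{H\le c\}\cap Q\subset\cl C_{(-1,-1)}\cup\cl C_{(2,2)}$ (the inclusion uses that $\nabla H$ never vanishes) and $K$ is connected and meets both closures, one immediately gets a point $r\in\cl C_{(-1,-1)}\cap\cl C_{(2,2)}$; the paper proves this nonemptiness by the very Zoretti-separator-plus-edge-extension argument you used for $c^\ast\ge c$. The interior condition is handled by a concrete computation: on the strip $[-1,0)\times[-1,2]$, for instance, the signs of $\partial_1 H$ and $\partial_2 H$ let one join any point with $H<c$ to $(-1,-1)$ by an L-shaped path inside $\{H<c\}$, so $C_{(2,2)}$ never enters that strip (and similarly near the other edges). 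Finally, once $r\in\interior Q$, the non-vanishing of $\nabla H(r)$ forces $r$ to be a limit of points with $H>c$, which supplies the third component; no appeal to $K'$ is needed.
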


\begin{rem}
It is in fact not much harder to see that there exists a point $p \in
\{H=c\} \cap \interior Q$ in the intersection of the closure of exactly
four components.
\end{rem}

But on the other hand we shall also prove (see
Section~\ref{proof:non-branching}) the following theorem about the local
behavior of the level set.

\begin{theorem}\label{non-branching}
Every $p \in \{H=c\} \cap \interior Q$ has a neighborhood that intersects
exactly two components of $\{H\neq c\} \cap Q$.
\end{theorem}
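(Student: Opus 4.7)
Fix $p=(x_0,y_0)\in\{H=c\}\cap\interior Q$; by the standing contradiction assumption, $\nabla H(p)=(f(x_0)-y_0,\,g(y_0)-x_0)\neq 0$. The plan is to combine the non-vanishing of this gradient with the four-component global picture of Theorem~\ref{c} to identify exactly two specific components of $\{H\neq c\}\cap Q$ that a small neighborhood of $p$ meets. Since the construction of $H$ is symmetric in the two coordinates under the simultaneous swap $(x,y,F,G,f,g)\leftrightarrow(y,x,G,F,g,f)$, I may assume throughout that $A:=f(x_0)-y_0>0$.

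First I would prove the \emph{horizontal sign property}: there exist $\delta,\varepsilon>0$ such that on the box $V:=(x_0-\delta,x_0+\delta)\times(y_0-\varepsilon,y_0+\varepsilon)$,
\[
\operatorname{sign}\bigl(H(x,y)-H(x_0,y)\bigr)=\operatorname{sign}(x-x_0).
\]
This would come from the auxiliary function $\phi_y(x):=F(x)-F(x_0)-y(x-x_0)=H(x,y)-H(x_0,y)$, which is differentiable at $x_0$ with $\phi_y'(x_0)=A-(y-y_0)$ and depends on $y$ only through the linear term $-(y-y_0)(x-x_0)$. The pointwise definition of the derivative of $\phi_{y_0}$ at $x_0$, combined with this uniform linear dependence on $y$, gives the sign control; for instance $\varepsilon<A/4$ and any $\delta$ so small that $\phi_{y_0}(x)/(x-x_0)>3A/4$ on the punctured interval will work. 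Notably, this step uses only the differentiability of $F$ at $x_0$ and no continuity of $f$.

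Set $\gamma(y):=H(x_0,y)$, a continuous function with $\gamma(y_0)=c$. The horizontal sign property already exhibits concrete connected subsets of $V\cap\{H\neq c\}$: when $B:=g(y_0)-x_0\neq 0$, an analogous vertical sign property for $\gamma$ forces $\gamma-c$ to have a definite sign on each side of $y_0$ in a small interval, so (in the case $A,B>0$) the open local NE quadrant at $p$ is contained in $\{H>c\}$ and the open local SW quadrant in $\{H<c\}$; each is connected and therefore lies in a unique global component $C^+$ of $\{H>c\}\cap Q$, respectively $C^-$ of $\{H<c\}\cap Q$. These are the two components I claim $V$ meets. What remains is to check that every other point of $V\cap\{H>c\}$ is still in $C^+$ (and symmetrically for $C^-$): given $q\in V\cap\{H>c\}$ lying in one of the ``mixed'' quadrants at $p$, the horizontal sign property guarantees $H(\cdot,y)>\gamma(y)$ on the whole right half of $V$, which should let me move $q$ horizontally into a part of $V$ where $H-c$ is definitively positive, and then adjust the second coordinate within $\{H>c\}\cap Q$ (possibly leaving $V$) using that $\{H>c\}\cap Q$ has only two global components by Theorem~\ref{c}.

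I expect the final path-construction step to be the main obstacle, together with handling the degenerate case $B=0$: there $\gamma$ can oscillate through $c$ arbitrarily close to $y_0$, so even the distinguished connected subsets set up in the previous paragraph require extra care. Because $f$ and $g$ are merely derivatives and typically discontinuous, $\nabla H$ itself is discontinuous away from $p$, so neither the implicit function theorem nor a gradient-flow argument is available; the connecting paths will have to be built by hand, using only the continuity of $H$, the sign property from step one, and the global constraint from Theorem~\ref{c}.
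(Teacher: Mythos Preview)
Your horizontal sign property is correct and genuinely uses the special form of $H$ (since $H(x,y)-H(x_0,y)=F(x)-F(x_0)-y(x-x_0)$ is affine in $y$); it is the analogue of the paper's ``sector'' step. But the obstacle you flag is a real gap, already in the non-degenerate case $B\neq 0$. With $A,B>0$, take $q=(x,y)$ in the local NW quadrant with $H(q)>c$. Your two sign relations yield only $H(x,y_0)<c<H(x_0,y)$; neither the horizontal segment from $q$ to $(x_0,y)$ nor the vertical one to $(x,y_0)$ is forced to stay in $\{H>c\}$, because $x'\mapsto H(x',y)$ need not be monotone on $(x,x_0)$ (your property fixes only the sign of $H(x',y)-H(x_0,y)$, not of $H(x',y)-c$). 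Invoking Theorem~\ref{c} tells you merely that $q$ lies in one of $C_{(-1,2)}$, $C_{(2,-1)}$, not which one, so nothing you have set up excludes that $q$ and the NE quadrant sit in \emph{different} global components of $\{H>c\}\cap Q$. The case $B=0$ is worse and cannot be removed by a further symmetry once the coordinate swap has been spent to secure $A>0$.

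The paper does not try to connect points at all. It surrounds $p$ by a closed polygon that meets $\{H=c\}$ in exactly two points; since by Theorem~\ref{c} every component of $\{H\neq c\}\cap Q$ contains a corner of $Q$, any component meeting the interior of the polygon must cross it, and two crossing points allow at most two components. The polygon is assembled from two rectangles, and its construction rests on two facts that go well beyond your sign property: the exact rectangle identity
\[
H(x_1,y_2)+H(x_2,y_1)-H(x_1,y_1)-H(x_2,y_2)=(x_2-x_1)(y_2-y_1),
\]
and a lemma (from differentiability of $G$ at $y_0$ alone) that on any nearby vertical line the solutions of $H=c$ cannot lie on both sides of $y_0$. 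These are exactly what pin down the level set in the ``mixed'' regions where your comparisons are inconclusive. That some such extra input is unavoidable is the content of the paper's closing remark: for a general differentiable function with nowhere vanishing gradient the conclusion is \emph{false} (Buczolich's example), so an argument using essentially the gradient direction at $p$ together with the global four-component picture of Theorem~\ref{c} cannot be completed.
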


However, the last two theorems clearly contradict each other, which will
complete the proof of Theorem~\ref{thm:fo}.

\section{Proof of Theorem~\ref{c}}\label{proof:c}

Recall that we defined
\[
c=\inf\set{d}{(\textrm{-}1,\textrm{-}1) \text{ and } (2,2) \text{ are in
the same component of } \{H\leq d\} \cap Q }.
\]

\begin{lemma}\label{min}
In fact, $c$ is a minimum; that is, 
$(\textrm{-}1,\textrm{-}1) \text{ and } (2,2)$ are in
the same component of $\{H\leq c\} \cap Q$.
\end{lemma}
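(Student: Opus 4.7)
The plan is to realize $c$ as a limit of witnesses and take a Hausdorff limit. By definition of $c$, I can pick a decreasing sequence $d_n \downarrow c$ such that for each $n$ there is a component $K_n$ of $\{H\le d_n\}\cap Q$ containing both $(\textrm{-}1,\textrm{-}1)$ and $(2,2)$. Since $H$ is continuous and $Q$ is compact, each set $\{H\le d_n\}\cap Q$ is compact, hence each $K_n$ is a compact connected subset of $Q$, i.e.\ a continuum.

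Next I would invoke compactness of the hyperspace: the space of nonempty compact subsets of $Q$, equipped with the Hausdorff metric, is itself compact. So after passing to a subsequence I may assume $K_n \to K$ in the Hausdorff sense for some nonempty compact $K\subseteq Q$. Since each $K_n$ contains the fixed points $(\textrm{-}1,\textrm{-}1)$ and $(2,2)$, so does $K$. A standard fact (the Hausdorff limit of continua in a compact metric space is a continuum) then gives that $K$ is connected.

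Finally I have to check $K\subseteq \{H\le c\}\cap Q$. Any point of $K$ is a limit of points $p_n\in K_n$, and $H(p_n)\le d_n$; since $d_n\to c$ and $H$ is continuous, $H$ of the limit is $\le c$. Thus $K$ is a connected subset of $\{H\le c\}\cap Q$ joining $(\textrm{-}1,\textrm{-}1)$ and $(2,2)$, which is exactly the statement of the lemma.

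The only mildly delicate step is the Hausdorff-limit-of-continua fact; I would either quote it from a standard reference on hyperspaces or, if a self-contained argument is preferred, verify directly that a separation of $K$ by disjoint open sets $U,V\subseteq Q$ would for large $n$ also separate $K_n$ (using the Hausdorff convergence to push $K_n$ into $U\cup V$ while still meeting both $U$ and $V$, since both corners lie in $K$), contradicting connectedness of $K_n$. This is the only non-routine ingredient; the rest is just continuity of $H$ and unwinding the definition of $c$.
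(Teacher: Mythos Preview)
Your argument is correct and in spirit matches the paper's: take witness continua $K_n$ at levels $d_n\downarrow c$ and pass to a limit. The paper streamlines the execution by observing that, since the $d_n$ are decreasing (it uses $d_n=c+1/n$), the components $K_n$ are automatically \emph{nested}: $K_{n+1}$ is connected, lies in $\{H\le d_n\}\cap Q$, and contains $(\textrm{-}1,\textrm{-}1)$, hence $K_{n+1}\subseteq K_n$. One then simply takes $K=\bigcap_n K_n$ and quotes the classical fact that a decreasing intersection of continua is connected, which sidesteps the hyperspace/Hausdorff-limit machinery and the passage to subsequences. (Incidentally, in your sketch the parenthetical ``since both corners lie in $K$'' is unnecessary: that $K_n$ eventually meets both $U$ and $V$ follows already from Hausdorff convergence and $K\cap U,K\cap V\neq\emptyset$.)
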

\begin{proof}
Let $K_n$ be the component of
$\lset{h\leq c+1/n} \cap Q$ containing both $(\textrm{-}1,\textrm{-}1)$ and $(2,2).$ 
Then $K_n$ is clearly a decreasing sequence of compact connected sets.
A well known theorem (see e.g. \cite[I. 9. 4]{Wh}) states 
that the intersection of such a sequence is connected. 
Hence $\cap_n K_n\subset 
\lset{h\leq c}$ is connected and contains both $(\textrm{-}1,\textrm{-}1)$ and
$(2,2)$, which shows that
$(\textrm{-}1,\textrm{-}1) \text{ and } (2,2)$ are in
the same component of $\{H\leq c\} \cap Q$.
\end{proof}

\begin{lemma}
$\max\lset{H(\textrm{-}1,\textrm{-}1),H(2,2)}<c<\min\lset{H(\textrm{-}1,2),H(2,\textrm{-}1)}.$
\end{lemma}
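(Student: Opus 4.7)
The plan is to prove the two strict inequalities separately.

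\emph{Upper bound} $c < \min\{H(\textrm{-}1, 2), H(2, \textrm{-}1)\}$. I would exhibit a continuous path in $Q$ from $(\textrm{-}1, \textrm{-}1)$ to $(2, 2)$ along which $H$ stays strictly below $\min\{H(\textrm{-}1, 2), H(2, \textrm{-}1)\}$. The natural candidate is the diagonal $\gamma(t) = (t, t)$ for $t \in [\textrm{-}1, 2]$. Since $F$ and $G$ are $1$-Lipschitz and non-decreasing (as their derivatives lie in $[0, 1]$), a short computation from the definition of $H$ yields
\[
H(\textrm{-}1, 2) - H(t, t) = (F(\textrm{-}1) - F(t)) + (G(2) - G(t)) + t^2 + 2 \geq t^2 - t + 1 \geq \tfrac{3}{4},
\]
and symmetrically $H(2, \textrm{-}1) - H(t, t) \geq t^2 - t + 1 \geq \tfrac{3}{4}$ for every $t \in [\textrm{-}1, 2]$. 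Setting $M = \max_{t \in [\textrm{-}1, 2]} H(t, t)$, these bounds force $M < \min\{H(\textrm{-}1, 2), H(2, \textrm{-}1)\}$, and the diagonal puts $(\textrm{-}1, \textrm{-}1)$ and $(2, 2)$ into a common component of $\{H \leq M\} \cap Q$, so $c \leq M < \min\{H(\textrm{-}1, 2), H(2, \textrm{-}1)\}$.

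\emph{Lower bound} $c > \max\{H(\textrm{-}1, \textrm{-}1), H(2, 2)\}$. The non-strict inequality is immediate, since if $d$ is less than either corner value then that corner is not even in $\{H \leq d\}$. For the strict version I would argue by contradiction: suppose $c = H(\textrm{-}1, \textrm{-}1)$ (the case $c = H(2, 2)$ is entirely symmetric). By Lemma~\ref{min} there is a connected component $K$ of $\{H \leq c\} \cap Q$ containing both $(\textrm{-}1, \textrm{-}1)$ and $(2, 2)$. However, the gradient $H'(\textrm{-}1, \textrm{-}1) = (f(\textrm{-}1) + 1,\, g(\textrm{-}1) + 1)$ has both components in $[1, 2]$, so differentiability of $H$ at $(\textrm{-}1, \textrm{-}1)$ gives $H(\textrm{-}1 + s, \textrm{-}1 + t) > H(\textrm{-}1, \textrm{-}1) = c$ for all sufficiently small $(s, t) \in [0, \infty)^2 \setminus \{(0, 0)\}$. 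Consequently $(\textrm{-}1, \textrm{-}1)$ is isolated in $\{H \leq c\} \cap Q$, which forces $K = \{(\textrm{-}1, \textrm{-}1)\}$ and contradicts $(2, 2) \in K$.

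The main thing to get right is the choice of path for the upper bound; the diagonal works because the universally positive quadratic $t^2 - t + 1 = (t - 1/2)^2 + 3/4$ supplies a uniform gap. The lower bound is essentially a local boundary observation: at each of $(\textrm{-}1, \textrm{-}1)$ and $(2, 2)$ the gradient of $H$ points strictly into $Q$, so $H$ is strictly larger at all nearby points of $Q$, pinning these two corners as strict local minima of $H|_Q$.
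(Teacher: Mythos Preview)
Your proof is correct, but both halves take a different route from the paper's.

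For the upper bound, the paper also exhibits a connecting path, but chooses the polygon through $(-1,-1)$, $(-1,1)$, $(0,2)$, $(2,2)$ and argues qualitatively from the signs of the partial derivatives (Lemma~\ref{monotone}) that $H$ stays below $H(-1,2)$ along it; your diagonal with the explicit estimate $H(-1,2)-H(t,t)\ge t^2-t+1\ge 3/4$ is slicker and gives a uniform quantitative gap for free. For the lower bound, the paper does something conceptually different: it builds a \emph{barrier} polygon through $(-1,2)$, $(1,2)$, $(2,1)$, $(2,-1)$ on which $H>H(2,2)$, which topologically separates $(-1,-1)$ from $(2,2)$ inside $\{H\le H(2,2)\}\cap Q$, and then invokes Lemma~\ref{min} to conclude $H(2,2)<c$. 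Your argument is purely local: the gradient at each of the corners $(-1,-1)$ and $(2,2)$ points strictly into $Q$, so each corner is isolated in $\{H\le H(\text{corner})\}\cap Q$, and Lemma~\ref{min} again yields the strict inequality. The paper's barrier argument is more in the spirit of the later separation arguments in Section~\ref{proof:branching}, while yours is shorter and needs only differentiability at the two corners.
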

\begin{proof}
To show $H(2,2)<c$ consider the polygon $P$ with vertices
$(\textrm{-}1,2)$, $(1,2)$, $(2,1)$ and $(2,\textrm{-}1)$. 
Since $\partial_1 H(x,y)=f(x)-y<0$ if $y>1$ and 
$\partial_2 H(x,y)=g(y)-x<0$ if $x>0$, it is easy to check that
$H$ is strictly bigger than $H(2,2)$ on the whole $P$. 
Therefore $P$ connects $(\textrm{-}1,2)$ and $(2,\textrm{-}1)$ in $\lset{H>H(2,2}$,
thus $(\textrm{-}1,\textrm{-}1)$ and $(2,2)$ cannot be in the same component of 
$\lset{H\le H(2,2)}$. By the previous Lemma this implies that
$H(2,2)<c$. Proving that $H(\textrm{-}1,\textrm{-}1)$ is similar.

To show that $c<H(\textrm{-}1,2)$ consider the polygon $P'$ 
with vertices $(\textrm{-}1,\textrm{-}1)$, $(\textrm{-}1,1)$, $(0,2)$ and 
$(2,2)$. Like above one 
can show that $H$ is strictly less than $H(\textrm{-}1,2)$ on the whole $P'$,
so denoting the maximum
of $H$ on $P'$ by $d$, we have $d<H(\textrm{-}1,2)$. 
Since $P'$ connects $(\textrm{-}1,\textrm{-}1)$
and $(2,2)$ in $\{H\le d\}\cap Q$ we also have $c\le d$, therefore
we have $c<H(\textrm{-}1,2)$. Proving that $c<H(2,\textrm{-}1)$ would be similar.
\end{proof}

The previous Lemma together with Lemma~\ref{monotone} clearly implies
the first statement of Theorem~\ref{c}: $\lset{H=c}\cap Q$ intersect 
each edge of $Q$ at exactly one point (which is not an endpoint of
the edge). Since $4$ points cut $\partial Q$ into four components
we also get that $\lset{H\neq c}\cap \partial Q$ has $4$ components
and each of them contains a vertex of $Q$. Therefore for completing
the proof of Theorem~\ref{c} we have to show that any component
of $\lset{H\neq c}\cap Q$ intersect $\partial Q$ and that 
the vertices of $Q$ belong to different components of 
$\lset{H\neq c}\cap Q$. 

The first claim is clear since if $C$ were a component of 
$\lset{H\neq c}\cap Q$ inside $Q$ then one of the (global) extrema 
of $H$ on the (compact)
closure of $C$ could not be on the boundary of $C$ (where $H=c$),
so this would be a local extremum but a function with non-vanishing
gradient cannot have a local extremum.

To prove the second claim first note that the components of 
$\lset{H\neq c}\cap Q$ are the components of $\lset{H<c}\cap Q$
and the components of $\lset{H>c}\cap Q$.
By the previous Lemma,
$(\textrm{-}1,\textrm{-}1)$ and $(2,2)$ are in $\lset{H<c}\cap Q$ while
$(\textrm{-}1,2)$ and $(2,\textrm{-}1)$ are in $\lset{H>c}\cap Q$, so it is enough to
prove that that the opposite vertices belong to different components.

Assume that $(\textrm{-}1,\textrm{-}1)$ and $(2,2)$ are in the
same component of $\lset{H<c}\cap Q$. Then there is a continuous curve 
in $\lset{H<c}\cap Q$ that connects $(\textrm{-}1,\textrm{-}1)$ and $(2,2).$ 
Let $d$ be the maximum of $H$ on this curve. Then on one hand
$d<c$; on the other hand,
$(\textrm{-}1,\textrm{-}1) \text{ and } (2,2)$ are in
the same component of $\{H\leq d\} \cap Q$,
so $c\leq d$, which is a contradiction.

Finally, assume that $(\textrm{-}1,2)$ and $(2,\textrm{-}1)$ are in the same components of 
$\lset{H>c}\cap Q$. Then there is a continuous curve 
in $\lset{H>c}\cap Q$ that connects $(\textrm{-}1,2)$ and $(2,\textrm{-}1)$, so it
also separates $(\textrm{-}1,1)$ and $(2,2)$. But this is a contradiction
with Lemma~\ref{min}.

\section{Proof of Theorem~\ref{branching}}\label{proof:branching}

The following topological lemma is surely well known. However, we were
unable to find it in the literature, so we sketch a proof here.

\begin{lemma}\label{Zoretti}
Let $M$ be a closed subset of $Q$ and $p\neq q\in M\cap\partial Q.$ Denote
by $E_1$ and $E_2$ the two connected components of $\partial
Q\setminus\{p,q\}$. Then the following statements are equivalent:
\begin{itemize}
\item[(i)] $p$ and $q$ are not in the same component of $M$.
\item[(ii)] there are points $e_i\in E_i \ (i=1,2)$ that can be connected
by a polygon in $Q\setminus M$.
\end{itemize}
\end{lemma}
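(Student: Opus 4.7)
For the direction (ii) $\Rightarrow$ (i), I would take a simple polygonal sub-arc $\gamma \subset Q \setminus M$ joining $e_1$ to $e_2$, slightly perturbed inside the open set $Q \setminus M$ so that $\gamma \cap \partial Q = \{e_1, e_2\}$. As an arc in the disk $Q$ with endpoints on $\partial Q$, $\gamma$ splits $Q \setminus \gamma$ into two open components $U_p, U_q$: one whose boundary contains the closed arc of $\partial Q \setminus \{e_1, e_2\}$ through $p$, and one through $q$. In particular $p \in U_p$ and $q \in U_q$. Since $M \cap \gamma = \emptyset$, every connected subset of $M$ lies in one of $U_p, U_q$, so no such subset can contain both $p$ and $q$.

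For the converse (i) $\Rightarrow$ (ii), compactness of $M$ and the coincidence of components with quasi-components give a partition $M = A \sqcup B$ into disjoint closed sets with $p \in A$ and $q \in B$. Let $\delta = d(A, B) > 0$ and fix a square grid on $\mathbb{R}^2$ of mesh $\epsilon < \delta/3$. Define $R_A$ (respectively $R_B$) as the union of those closed grid cells meeting $A$ (resp.\ $B$), intersected with $Q$. Then $R_A, R_B$ are disjoint closed polygonal regions of $Q$, and the mesh is small enough that no cell of $R_A$ shares even a vertex with a cell of $R_B$. Moreover $E_i \not\subset R_A \cup R_B$: otherwise the connected closed set $\overline{E_i} = E_i \cup \{p, q\}$ would lie in one of the disjoint closed sets $R_A, R_B$, contradicting $p \in R_A$ and $q \in R_B$. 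Hence one may pick $e_i \in E_i \setminus (R_A \cup R_B) \subset Q \setminus M$.

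The main step is to connect $e_1$ and $e_2$ by a polygonal arc inside $Q \setminus (R_A \cup R_B)$. I would argue combinatorially via the dual graph $G$ whose vertices are closed grid cells in $Q$ and whose edges join cells sharing a side; call a cell \emph{free} if it meets neither $R_A$ nor $R_B$. Assume for contradiction that no chain of free cells in $G$ connects a cell meeting $E_1$ to one meeting $E_2$. Then the grid admits a cut of edges, each bordered by a free cell on one side and by a cell of $R_A \cup R_B$ on the other, whose underlying polygonal set separates the $E_1$-reachable free cells from those meeting $E_2$. The key combinatorial observation is that, because $R_A$- and $R_B$-cells do not share a vertex, around each vertex of the cut all adjacent non-free cells belong to a single one of $R_A, R_B$; hence each connected piece of the cut is a polygonal arc or loop bordered entirely by cells of $R_A$ or entirely by cells of $R_B$. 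A planar-topology analysis then rules this out: a single separating arc contained in, say, $R_A$ would have endpoints $u, v \in R_A \cap \partial Q$ splitting $\partial Q$ into two arcs that must respectively contain $E_1$ and $E_2$, forcing $\{u, v\} = \{p, q\}$; but $q \notin R_A$, contradiction. Multi-piece configurations are defeated similarly by exploiting the positive gap between any $R_A$-piece and any $R_B$-piece of the cut, which contains free cells providing a polygonal detour between the two purported sides.

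The main obstacle is this final planar-topology step, essentially a discrete Jordan curve / Phragm\'en--Brouwer argument converting a dual-graph cut into a topological separator lying in a single one of $R_A$ or $R_B$, and then ruling such a separator out using $R_A \cap R_B = \emptyset$. Handling multi-component cuts and the detour argument rigorously requires some bookkeeping, but each ingredient is standard planar topology.
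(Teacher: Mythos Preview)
The paper's own proof is two lines: it calls (ii)$\Rightarrow$(i) obvious and obtains (i)$\Rightarrow$(ii) by a direct appeal to Zoretti's Theorem (for any component $K$ of a planar compact $M$ and any $\eps>0$ there is a simple closed polygon in the $\eps$-neighbourhood of $K$, enclosing $K$, disjoint from $M$). Applied to the component of $p$ with $\eps<d(q,K)$, this polygon encloses $p$ but not $q$ and hence furnishes an arc in $Q\setminus M$ meeting both $E_1$ and $E_2$.

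Your (ii)$\Rightarrow$(i) is fine and just fleshes out what the paper calls obvious. Your (i)$\Rightarrow$(ii) takes a genuinely different route: instead of quoting Zoretti you split $M=A\sqcup B$ via quasi-components, thicken to disjoint grid regions $R_A,R_B$, pick $e_i\in E_i\setminus(R_A\cup R_B)$, and try to connect $e_1,e_2$ through free cells. The setup is correct, and the observation that boundary arcs of the reachable region are bordered by cells of a single $R_A$ or $R_B$ is nice.

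The final step, however, has a real gap. From a separating arc bordered by $R_A$-cells with endpoints $u,v\in R_A\cap\partial Q$ you conclude that the two arcs of $\partial Q\setminus\{u,v\}$ ``contain $E_1$ and $E_2$'', forcing $\{u,v\}=\{p,q\}$. But the arc only separates the \emph{points} $e_1,e_2$, not the arcs $E_1,E_2$; the endpoints $u,v$ are merely within $\eps$ of $A$, and $A$ may meet $\partial Q$ far from $p$ and $q$, so there is no reason for $\{u,v\}=\{p,q\}$. The multi-piece case is in worse shape: the free cells in the gap between an $R_A$-piece and an $R_B$-piece of the cut need not be reachable from either side, so the ``detour'' is not available. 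What you are really attempting here is to reprove a Zoretti/Janiszewski-type separation theorem by hand, which is precisely the work the paper outsources; closing your argument seems to require essentially that machinery anyway.
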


\begin{proof}
The implication (ii) $\Rightarrow$ (i) is obvious. The implication (i)
$\Rightarrow$ (ii) easily follows from Zoretti's Theorem (see \cite{Wh}
Ch. VI., Cor. 3.11), which states that if $K$ is a component of a compact
set $M$ in the plane and $\eps>0$, then there exists a simple closed
polygon $P$ in the $\eps$-neighborhood of $K$ that encloses $K$ and is
disjoint from $M$.
\end{proof}

Now we turn to the proof of Theorem~\ref{branching}. Let us denote by $C_p$
the component of $\{H\neq c\} \cap Q$ that contains the corner $p$ of $Q$
(Theorem~\ref{c} shows that $p\in\{H \neq c\} \cap Q$). Again by
Theorem~\ref{c} we have $\{H\neq c\} \cap Q = C_{(\textrm{-}1,\textrm{-}1)}
\cup C_{(2,2)} \cup C_{(\textrm{-}1,2)} \cup C_{(2,\textrm{-}1)}$, and $H$
is less than $c$ in the first two and greater than $c$ in the last two
components.

\begin{proposition}\label{r}
There exists a point $r \in \cl{C_{(\textrm{-}1,\textrm{-}1)}} \cap
\cl{C_{(2,2)}}$.
\end{proposition}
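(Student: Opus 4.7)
The plan is to invoke the compact connected set $K$ produced by Lemma~\ref{min}: the component of $\lset{H\le c}\cap Q$ containing both $(\textrm{-}1,\textrm{-}1)$ and $(2,2)$. First I would observe that $C_{(\textrm{-}1,\textrm{-}1)}\cup C_{(2,2)}\subseteq K$: each $C_p$ is connected, sits inside $\lset{H\le c}\cap Q$, and already meets $K$ at the corner $p$, so the component $K$ must swallow it. The whole argument then reduces to proving the inclusion
\[
K\;\subseteq\;\cl{C_{(\textrm{-}1,\textrm{-}1)}}\cup\cl{C_{(2,2)}}.
\]
Once this is in hand, the two relatively closed pieces $K\cap\cl{C_{(\textrm{-}1,\textrm{-}1)}}$ and $K\cap\cl{C_{(2,2)}}$ are nonempty---they contain the respective corners---and cover $K$, so connectedness forces them to share a point, which is the desired $r$.

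To prove the inclusion I would split $K=(K\cap\lset{H<c})\cup(K\cap\lset{H=c})$. The first piece is free: by Theorem~\ref{c}, $C_{(\textrm{-}1,\textrm{-}1)}$ and $C_{(2,2)}$ are the only two components of $\lset{H<c}\cap Q$, so this piece is already contained in $C_{(\textrm{-}1,\textrm{-}1)}\cup C_{(2,2)}$. The real content is to show that every $r\in K\cap\lset{H=c}$ is a limit point of $C_{(\textrm{-}1,\textrm{-}1)}\cup C_{(2,2)}$. I would argue by contradiction: if $r$ avoids both closures, some Euclidean neighborhood $V$ of $r$ misses $C_{(\textrm{-}1,\textrm{-}1)}\cup C_{(2,2)}$ and hence all of $\lset{H<c}\cap Q$, so $H\ge c=H(r)$ on $V\cap Q$, making $r$ a local minimum of $H$ on $Q$.

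The main (but modest) obstacle is then ruling out such a local minimum. For $r\in\interior Q$ this is automatic: Fermat's theorem would force $H'(r)=0$, contradicting the standing assumption that the gradient of $H$ never vanishes. For $r\in\partial Q$, Lemma~\ref{monotone} cuts the candidates down to four---one on each edge---and a direct inspection settles each case. For instance, on the bottom edge $H$ increases strictly from $H(\textrm{-}1,\textrm{-}1)<c$ to $H(2,\textrm{-}1)>c$, so the unique $\lset{H=c}$-point on that edge is approached from the left along the edge by points of $\lset{H<c}$ that necessarily sit in $C_{(\textrm{-}1,\textrm{-}1)}$, placing it in $\cl{C_{(\textrm{-}1,\textrm{-}1)}}$; the other three edges are symmetric, and each of the four boundary level-set points ends up in $\cl{C_{(\textrm{-}1,\textrm{-}1)}}\cup\cl{C_{(2,2)}}$, contradicting the choice of $r$ and completing the plan.
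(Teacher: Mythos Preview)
Your argument is correct, and it takes a genuinely different route from the paper's.

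The paper argues by contradiction: assuming $\cl{C_{(\textrm{-}1,\textrm{-}1)}}\cap\cl{C_{(2,2)}}=\emptyset$, it asserts that $\cl{C_{(\textrm{-}1,\textrm{-}1)}}$ and $\cl{C_{(2,2)}}$ are the only components of $\lset{H\le c}\cap Q$, then applies the Zoretti--type separation Lemma~\ref{Zoretti} to produce a polygon in $\lset{H>c}\cap Q$ from one side of $\partial Q$ to the other, extends it along edges via Lemma~\ref{monotone} to connect $(\textrm{-}1,2)$ and $(2,\textrm{-}1)$, and contradicts Theorem~\ref{c}. Your approach instead feeds directly off Lemma~\ref{min}: you cover the connected set $K$ by the two relatively closed pieces $K\cap\cl{C_{(\textrm{-}1,\textrm{-}1)}}$ and $K\cap\cl{C_{(2,2)}}$ and let connectedness do the work. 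This bypasses Lemma~\ref{Zoretti} and Zoretti's theorem entirely, making the proof more elementary. It is also worth noting that the paper's assertion that $\lset{H\le c}\cap Q$ has exactly the two components $\cl{C_{(\textrm{-}1,\textrm{-}1)}}$ and $\cl{C_{(2,2)}}$ tacitly requires the very inclusion $\lset{H=c}\cap Q\subseteq\cl{C_{(\textrm{-}1,\textrm{-}1)}}\cup\cl{C_{(2,2)}}$ that you prove explicitly via the no--local--minimum argument and the edge analysis; once that inclusion is in hand, the contradiction with Lemma~\ref{min} is immediate and the Zoretti step becomes superfluous. So your proof not only avoids the heavier topological tool but also makes explicit a step the paper leaves implicit.
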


\begin{proof}
Suppose, on the contrary, that there is no such point. Then the components
of $\{H\leq c\}\cap Q$ are $\cl{C_{(\textrm{-}1,\textrm{-}1)}}$ and
$\cl{C_{(2,2)}}$, hence $(\textrm{-}1,\textrm{-}1)$ and $(2,2)$ are in
different components. Therefore, if we apply the
previous lemma to $\{H\leq c\}\cap Q$ with
$p=(\textrm{-}1,\textrm{-}1)$ and $q=(2,2)$, then we obtain a polygon in
$\{H>c\} \cap Q$ joining either the left or the top edge to either the
right or the bottom edge of $Q$.  Let us now join the endpoints of this
polygon to the points $(\textrm{-}1,\textrm{-}1)$ and $(2,2)$ by two
segments. By Lemma~\ref{monotone} these segments must also be in $\{H > c\}
\cap Q$, and we thus obtain a polygon in $\{H > c\} \cap Q$ connecting
$(\textrm{-}1,\textrm{-}1)$ and $(2,2)$, which contradicts Theorem~\ref{c}.
\end{proof}

Our next aim is to show that such a point $r$ cannot be on the edges of
$Q$. The other cases being similar we only consider the case of the left
edge.

\begin{lemma}
$\{[\textrm{-}1,0) \times [\textrm{-}1,2]\} \cap C_{(2,2)} = \emptyset$
\end{lemma}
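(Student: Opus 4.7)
The plan is to prove this by contradiction using the two monotonicity facts that are manifest in the region: on the strip $[-1,0)\times[-1,2]$ and on the bottom edge, $H$ is strictly monotone in the appropriate direction, and combining these lets us build an explicit path in $\{H<c\}$ joining $(-1,-1)$ to the assumed point, which contradicts Theorem~\ref{c}.

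More precisely, I would suppose there is a point $(a,b)\in[-1,0)\times[-1,2]$ with $(a,b)\in C_{(2,2)}$. Since $C_{(2,2)}\subset\{H<c\}$ (Theorem~\ref{c} places the lower–right-corner components in $\{H<c\}$), we have $H(a,b)<c$. On the vertical line $\{a\}\times[-1,2]$, because $a<0$ and $g\geq 0$, the partial derivative $\partial_2 H(a,y)=g(y)-a\geq -a>0$, so $H$ is strictly increasing in $y$ on this line. Consequently $H(a,y)\leq H(a,b)<c$ for every $y\in[-1,b]$, and the vertical segment $\{a\}\times[-1,b]$ lies in $\{H<c\}\cap Q$.

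In particular $H(a,-1)<c$. Now on the bottom edge of $Q$ we have $\partial_1 H(x,-1)=f(x)+1>0$, so $H$ is strictly increasing along this edge, and therefore $H(x,-1)\leq H(a,-1)<c$ for every $x\in[-1,a]$. Concatenating the horizontal segment from $(-1,-1)$ to $(a,-1)$ with the vertical segment from $(a,-1)$ to $(a,b)$ produces a continuous path inside $\{H<c\}\cap Q$ joining $(-1,-1)$ to $(a,b)$. This places $(-1,-1)$ and $(a,b)$ in the same component of $\{H<c\}\cap Q$; but by Theorem~\ref{c} the corner $(-1,-1)$ lies in $C_{(-1,-1)}$, a component different from $C_{(2,2)}$, which is the contradiction.

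There is no serious obstacle: the whole content is the two strict-monotonicity observations, and a small care is needed only to make sure the inequalities are strict (so the segments really lie in $\{H<c\}$, not merely in $\{H\leq c\}$), which is why the lemma excludes $x=0$ — strict positivity of $\partial_2 H$ on the vertical line requires $a<0$.
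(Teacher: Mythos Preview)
Your argument is correct and essentially identical to the paper's: both connect an arbitrary point $(a,b)$ of the strip with $H(a,b)<c$ to $(-1,-1)$ by the vertical segment down to $(a,-1)$ (using $\partial_2 H>0$ since $a<0$) followed by the horizontal segment along the bottom edge (using $\partial_1 H>0$), concluding that the point lies in $C_{(-1,-1)}$. The only cosmetic differences are that you phrase it as a contradiction rather than directly, and you wrote ``lower--right-corner'' where you meant top-right --- $(2,2)$ is the top-right corner of $Q$ --- but your use of $C_{(2,2)}\subset\{H<c\}$ is correct.
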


\begin{proof}
We have to prove that if $(x,y)\in [\textrm{-}1,0) \times [\textrm{-}1,2])$
such that $H(x,y)<c$, then $(x,y)\in C_{(\textrm{-}1,\textrm{-}1)}$. In
order to do this, it is sufficient to show that the vertical segment $S_1$
between $(x,y)$ and $(x,\textrm{-}1)$ and the horizontal segment $S_2$
between $(x,\textrm{-}1)$ and $(\textrm{-}1,\textrm{-}1)$ together form a
polygon in $\lset{H(x,y)<c}$ connecting $(x,y)$ to
$(\textrm{-}1,\textrm{-}1)$. But by (\ref{gradient}) in
Section~\ref{skeleton} the partial derivative $\partial_2 H$ is positive on
$S_1$, thus all values of $H$ are less then $H(x,y)$ here, hence less then
$c$, and $\partial_1 H$ is positive on $S_2$, thus the values of $H$ here
are less then $H(x,\textrm{-}1)$, which is again less than $c$.
\end{proof}

Let $r$ be a point provided by Proposition~\ref{r}. If we repeat the
agrument of the previous lemma for the other three edges, then we obtain
that $r$ must be in the interior of $Q$. To complete the proof of
Theorem~\ref{branching} it is enough to show that there exists a component
in which $H$ is greater than $c$ (that is either component
$C_{(\textrm{-}1,2)}$ or $C_{(2,\textrm{-}1)}$) such that the closure of
the component contains $r$. But $r\in \interior Q$, and then this statement
follows from the fact that the gradient of $H$ nowhere vanishes, so $H$
attains no local extremum.

\section{Proof of Theorem~\ref{non-branching}}\label{proof:non-branching}

\begin{lemma}\label{teglalap} 
Let $x_1,x_2,y_1,y_2\in\real$ be such that $x_1<x_2$
and $y_1<y_2$. Then $H(x_1,y_2)+H(x_2,y_1)-H(x_1,y_1)-H(x_2,y_2)>0$. 
\end{lemma}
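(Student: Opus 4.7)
The plan is simply to expand the left hand side using the explicit formula $H(x,y)=F(x)+G(y)-xy$ from~(\ref{formula}) and observe that all the ``hard'' terms involving $F$ and $G$ cancel. Indeed, each of $F(x_1)$, $F(x_2)$, $G(y_1)$, $G(y_2)$ appears once with a plus sign and once with a minus sign in the combination $H(x_1,y_2)+H(x_2,y_1)-H(x_1,y_1)-H(x_2,y_2)$, so what remains is a pure bilinear expression in the coordinates.

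Concretely, after cancellation the remaining terms are $-x_1y_2-x_2y_1+x_1y_1+x_2y_2$, which factors as $(x_2-x_1)(y_2-y_1)$. Under the hypotheses $x_1<x_2$ and $y_1<y_2$ both factors are strictly positive, giving the desired strict inequality.

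There is no real obstacle here: the lemma is a formal algebraic identity about functions of the special separable-plus-bilinear form appearing in~(\ref{formula}). The only thing worth emphasizing, for later use in Section~\ref{proof:non-branching}, is that the positivity is \emph{strict} and uses no assumption on $f$ and $g$ beyond the definition of $H$; in particular, the fact that $F$ and $G$ are antiderivatives of derivatives plays no role in this particular lemma, only the product structure of the $-xy$ term does.
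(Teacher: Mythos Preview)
Your argument is correct and is essentially the same as the paper's: both expand using (\ref{formula}), note the cancellation of the $F$ and $G$ terms, and identify the remaining expression with $(x_2-x_1)(y_2-y_1)>0$.
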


\begin{proof}
Using the definition (\ref{formula}) of $H$, a straightforward computation 
shows that the value of the above sum in fact equals 
$(x_2-x_1)(y_2-y_1)>0.$
\end{proof}

\begin{lemma}\label{Tamas}
Let $(x_0,y_0)\in\real^2$ be such that $H(x_0,y_0)=0$ and $\partial_2
H(x_0,y_0)\neq 0$. Then there exists $\eps>0$ such that if
$x_1\in[x_0-\eps,x_0+\eps]$ and $y_1,
y_2\in[y_0-\eps,y_0+\eps]$ and $H(x_1,y_1)=H(x_1,y_2)=0$ then one
of the followings holds:
\begin{rlist}
\item $y_1=y_2=y_0$,
\item $y_1<y_0$ and $y_2<y_0$,
\item $y_1>y_0$ and $y_2>y_0$.
\end{rlist}
\end{lemma}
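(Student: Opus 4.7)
The proof combines the first-order Taylor expansion of $H$ at $(x_0,y_0)$ with the rectangle identity supplied by Lemma~\ref{teglalap}. Writing $\alpha=\partial_1 H(x_0,y_0)$ and $\beta=\partial_2 H(x_0,y_0)$, I may assume $\beta>0$ (the case $\beta<0$ is symmetric). Since $H$ is differentiable at $(x_0,y_0)$, for any $\gamma\in(0,\beta)$ there is an $\eps_0>0$ with
\[
\abs{H(x,y)-\alpha(x-x_0)-\beta(y-y_0)}\le\gamma\sqrt{(x-x_0)^2+(y-y_0)^2}
\]
for all $(x,y)\in[x_0-\eps_0,x_0+\eps_0]\times[y_0-\eps_0,y_0+\eps_0]$; I then set $\eps=\min(\eps_0,(\beta-\gamma)/2)$. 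The main obstacle is that $H$ is only differentiable, not $C^1$, so the implicit function theorem is unavailable; Lemma~\ref{teglalap} will substitute for the missing regularity.

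Suppose $H(x_1,y_1)=H(x_1,y_2)=0$ with $x_1,y_1,y_2$ in their respective $\eps$-intervals around $x_0,y_0,y_0$. If $y_1=y_2$ one of (i)--(iii) holds trivially, so I may assume $y_1<y_2$, and then I need only rule out the forbidden configuration $y_1\le y_0\le y_2$. If $x_1=x_0$, the hypothesis becomes $H(x_0,y_1)=H(x_0,y_2)=0$, and the Taylor inequality specialized to $x=x_0$ yields $(\beta-\gamma)\abs{y_i-y_0}\le 0$ for $i=1,2$, forcing $y_1=y_2=y_0$ and contradicting $y_1<y_2$. If $x_1\neq x_0$, I apply Lemma~\ref{teglalap} to the rectangle with corners $(x_0,y_1),(x_0,y_2),(x_1,y_1),(x_1,y_2)$; since $H$ vanishes at the latter two corners, the identity collapses---irrespective of the sign of $x_1-x_0$---to
\[
H(x_0,y_2)-H(x_0,y_1)=(x_1-x_0)(y_2-y_1).
\]

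To bound the left-hand side, I write $H(x_0,y_i)=\beta(y_i-y_0)+r_i$ with $\abs{r_i}\le\gamma\abs{y_i-y_0}$. The decisive observation in the forbidden configuration $y_1\le y_0\le y_2$ is the equality $\abs{y_1-y_0}+\abs{y_2-y_0}=y_2-y_1$, which yields
\[
H(x_0,y_2)-H(x_0,y_1)\ge\beta(y_2-y_1)-(\abs{r_1}+\abs{r_2})\ge(\beta-\gamma)(y_2-y_1)>0.
\]
Combined with the displayed identity this gives $(x_1-x_0)(y_2-y_1)\ge(\beta-\gamma)(y_2-y_1)$, so $x_1-x_0\ge\beta-\gamma$ when $x_1>x_0$, contradicting $\abs{x_1-x_0}\le\eps<\beta-\gamma$; and when $x_1<x_0$ the right side of the identity is strictly negative while the left is strictly positive, an immediate contradiction. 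This completes the proof.
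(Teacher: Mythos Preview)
Your proof is correct and, once unwound, is essentially the paper's argument in different clothing: the rectangle identity from Lemma~\ref{teglalap} applied with two corners at $(x_1,y_1),(x_1,y_2)$ collapses exactly to $G(y_2)-G(y_1)=x_1(y_2-y_1)$, which is the secant--slope identity the paper derives directly from the formula $H(x,y)=F(x)+G(y)-xy$; and your Taylor estimate for $H(x_0,\cdot)$ is just the differentiability of $G$ at $y_0$ together with $G'(y_0)\neq x_0$. The paper simply works with $G$ from the outset, making the argument a couple of lines shorter, but the mathematical content is the same.
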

\begin{proof}
From $H(x_0,y_0)=0$ and (\ref{formula})
we get $G(y_0)=x_0 y_0 - F(x_0)$, while  
$\partial_2 H(x_0,y_0)\neq 0$ means 
$G'(y_0)\neq x_0$. These imply that for a
sufficiently small $\eps$ if $y_0-\eps<y_1\le y_0\le y_2<y_0+\eps$ and 
$y_1\neq y_2$ then the slope of the segment between $(y_1,G(y_1))$ and
$(y_2,G(y_2))$ is not in $[x_0-\eps,x_0+\eps]$.
But if $H(x_1,y_1)=H(x_1,y_2)=0$ then $G(y_1)=x_1 y_1 - F(x_1)$ and 
$G(y_2)=x_1 y_2 - F(x_1)$, so the slope of the segment between 
$(y_1,G(y_1))$ and $(y_2,G(y_2))$ is $x_1$. 
Therefore, with this $\eps$, the conditions of the lemma imply that 
one of (i),(ii) or (iii) must hold.
\end{proof}

Before turning to the proof of Theorem~\ref{non-branching} 
observe that if 
$H$ is given by equation (\ref{formula}) then $H_1(x,y)=-H(1-x,y)$ is
also of the form (\ref{formula}).
Indeed, taking $F_1(x)=-F(1-x)$ and $G_1(y)=y-G(y)$, we get
$f_1(x)=F'_1(x)=f(1-x)$ and $g_1(y)=G'_1(y)=1-g(y) : [0,1]\to[0,1]$
derivatives and $H_1(x,y)=F_1(x)+G_1(x)-xy= -H(1-x,y).$ 
It is also clear if $H$ is of the form (\ref{formula}) then
$H_2(x,y)=H(y,x)$ is also of the form (\ref{formula}). This means
that when we study the local behavior of the function $H$ around an
arbitrary point $(x,y)\in\lset{H=0}\cap Q$ we can assume that 
$\partial_2 H(x,y)>0$ as this is
true (at the apropriate point) 
for at least one of the functions  we can get using these
symmetries.
By adding a constant to $H$ we can also assume that $d=0$.
Therefore for proving Theorem~\ref{non-branching}
we can assume these restrictions,
so it is enough to prove the following.

\begin{claim}\label{two}
Let $(x_0,y_0)\interior Q$ be such that $H(x_0,y_0)=0$ and 
$\partial_2 H(x_0,y_0)>0$. 
Then there exists a
neighbourhood of $(x_0,y_0)$ that intersects exactly two components of
$\{H\neq 0\}\cap Q$.
\end{claim}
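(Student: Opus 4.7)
The plan is to build a rectangular neighbourhood $U$ of $(x_0,y_0)$ on which, for each vertical column, the zero set of $H$ is trapped on one side of (or at) $y_0$; the ``obvious'' open regions above the maximal zero and below the minimal zero on each column will then provide the two components required by the claim. First, use Lemma~\ref{Tamas} to pick $\eps>0$ so small that its conclusion holds on $[x_0-\eps,x_0+\eps]\times[y_0-\eps,y_0+\eps]\subset\interior Q$. Applying Lemma~\ref{Tamas} with $y_1=y_0$ shows that $y_0$ is the only zero of $H(x_0,\cdot)$ in $[y_0-\eps,y_0+\eps]$, and combined with $\partial_2 H(x_0,y_0)>0$ this forces $H(x_0,y)>0$ on $(y_0,y_0+\eps]$ and $H(x_0,y)<0$ on $[y_0-\eps,y_0)$. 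By continuity of $H$, I then shrink to some $a\in(0,\eps]$ with $H(x,y_0+\eps)>0$ and $H(x,y_0-\eps)<0$ for every $x\in[x_0-a,x_0+a]$, and set $U:=(x_0-a,x_0+a)\times(y_0-\eps,y_0+\eps)$.

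For each $x\in[x_0-a,x_0+a]$ the zero set $Z_x:=\{y\in[y_0-\eps,y_0+\eps]:H(x,y)=0\}$ is nonempty (by the intermediate value theorem) and compact, and Lemma~\ref{Tamas} places it in one of $\{y_0\}$, $[y_0-\eps,y_0)$, or $(y_0,y_0+\eps]$ according to the sign of $H(x,y_0)$. Letting $\phi(x):=\max Z_x$ and $\psi(x):=\min Z_x$, a standard limit argument using continuity of $H$ shows that $\phi$ is upper and $\psi$ lower semicontinuous. I then set $V^+:=\{(x,y)\in U:y>\phi(x)\}$ and $V^-:=\{(x,y)\in U:y<\psi(x)\}$; these are open by semicontinuity, and since there are no zeros of $H(x,\cdot)$ above $\phi(x)$ (respectively below $\psi(x)$), the signs on the top and bottom edges of $U$ give $V^+\subset\{H>0\}$ and $V^-\subset\{H<0\}$. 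Uniform continuity of $H$ on the compact top edge (where $H>0$) yields a horizontal strip contained in $V^+$, and every point of $V^+$ is joined to this strip by a vertical segment lying in $V^+$, so $V^+$ is path-connected; symmetrically, $V^-$ is path-connected.

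Since $V^+\subset\{H>0\}$ and $V^-\subset\{H<0\}$ are connected, they belong to distinct components of $\{H\neq 0\}\cap Q$, so $U$ meets at least two such components. To show it meets no third, take any $(x^*,y^*)\in U$ with $H(x^*,y^*)\neq 0$ and $(x^*,y^*)\notin V^+\cup V^-$; then $\psi(x^*)\le y^*\le\phi(x^*)$, and we may assume $H(x^*,y^*)>0$ (the negative case is symmetric), so $(x^*,y^*)$ sits in a ``bump'' component $B\neq V^+$ of $\{H>0\}\cap U$. I must show that $B$ lies in the same component of $\{H>0\}\cap Q$ as $V^+$. Since $H$ has no local extrema (the gradient never vanishes), the supremum of $H$ on the compact set $\overline B$ is attained on $\overline B\cap\partial U$: not on the bottom edge (where $H<0$), and not on the top edge (which would force $B$ to coincide with $V^+$ via the strip argument above), so $\overline B$ meets a vertical edge of $U$ at a point $q$ with $H(q)>0$, and the global component of $\{H>0\}\cap Q$ containing $B$ extends strictly beyond $U$ through $q$.

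The hardest step of the plan is the final one: ruling out that this extension of $B$ and the analogous extension of $V^+$ land in different global components of $\{H>0\}\cap Q$. Here I would combine Theorem~\ref{c} (which bounds the number of such global components to two, each touching a designated corner of $Q$) with Lemma~\ref{teglalap}, whose submodularity identity $H(x_1,y_2)+H(x_2,y_1)-H(x_1,y_1)-H(x_2,y_2)=(x_2-x_1)(y_2-y_1)$ is essentially the only quantitative tool on hand once one leaves the immediate neighbourhood of $(x_0,y_0)$; together these should force $B$'s extension to meet $V^+$'s extension rather than travel to the opposite corner without contradicting the no-local-extrema property.
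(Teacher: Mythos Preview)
Your argument is incomplete, and the gap is exactly where you flag it. After locating a ``bump'' component $B\subset\{H>0\}\cap U$ distinct from $V^+$ and showing that $\overline B$ meets a vertical edge of $U$ at a point $q$ with $H(q)>0$, you still have to prove that $B$ and $V^+$ lie in the \emph{same} global component of $\{H>0\}\cap Q$. Theorem~\ref{c} only tells you there are two such global components, $C_{(\textrm{-}1,2)}$ and $C_{(2,\textrm{-}1)}$; it does not by itself prevent $V^+\subset C_{(\textrm{-}1,2)}$ and $B\subset C_{(2,\textrm{-}1)}$. Your appeal to Lemma~\ref{teglalap} is vague: applying the submodularity inequality to the corners $x_0<x_1$, $a<b$ of a bump (where $H(x_1,a)=H(x_1,b)=0$ and $a,b<y_0$) yields only $H(x_0,b)>H(x_0,a)$, which is no contradiction. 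Nor does the no-local-extrema property help once $B$ has escaped $U$; outside $U$ you have lost the trichotomy of Lemma~\ref{Tamas} and there is no evident mechanism forcing the two global pieces to merge. So the plan, as stated, does not close.

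The paper avoids this difficulty by never allowing such bumps to appear on the boundary of the chosen neighbourhood. Instead of a rectangle with arbitrary columns, it builds a polygon around $(x_0,y_0)$ (two carefully placed rectangles, one on each side) whose boundary meets $\{H=0\}$ at \emph{exactly two} points; the corners of these rectangles are selected using Lemma~\ref{Tamas} together with a maximal/extremal choice of a zero on a nearby column, and Lemma~\ref{teglalap} is then applied pointwise along the remaining edges to certify a definite sign there. Since (by Theorem~\ref{c}) every component of $\{H\neq 0\}\cap Q$ contains a corner of $Q$, any component meeting the interior of this polygon must cross its boundary, and two points cut the boundary into two arcs, giving the bound of two components for free. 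The moral is that the real work goes into engineering the boundary so the counting is trivial, rather than into analysing what can happen inside a generic rectangle.
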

\begin{proof}


Every neighbourhood of $(x_0,y_0)$ 
has to meet at least two components, otherwise
$(x_0,y_0)$ would be a local extremum but we assumed 
that gradient of $H$ is nowhere vanishing.

In order to show that a small neighborhood of $(x_0,y_0)$ meets at most
two components it is enough to construct a polygon in $Q$
around the point which intersects $\{H=0\}$ at exactly two points. Indeed, 
in this case there can be at most two components that intersect the 
polygon  
(since two points cut a polygon into two components).
On the other hand, every
component of $\{H\neq 0\}\cap Q$ that intersects the interior of the polygon 
has to 
meet the
polygon itself, 
since by Theorem~\ref{branching} each component of $\{H\neq 0\}\cap Q$
contains a vertex of $Q$. 

To construct such
a polygon it is enough to find two rectangles in $Q$, one to the left and 
one to
the right from $(x_0,y_0)$, which contain this point on vertical edges (but
not as a vertex) and which both intersect $\{H=0\}$ in just one additional
point. (See Fig. 2.)

\begin{figure}[!ht]
\begin{center}
\includegraphics[scale=.8]{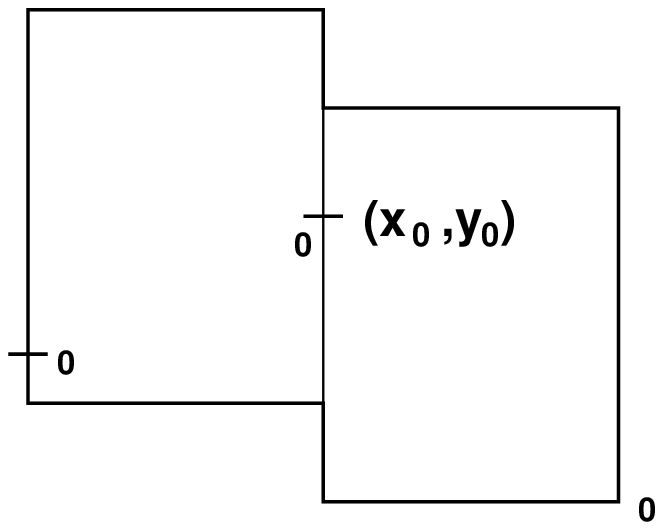}
\caption{}\label{fig:polygon}
\end{center}
\end{figure}

Since  $\partial_2 H(x_0,y_0)>0$ we can find a small 
open sector in $Q$
with vertex $(x_0,y_0)$ with vertical axis upwards, where $H$ is 
positive and a similar negative sector
downwards. (See Fig. \ref{fig:harom}. and Fig. \ref{fig:negy}.)
Choose the radius of the sectors smaller than the $\eps$ we get in
Lemma~\ref{Tamas}. Choose $x_1<x_0$ and $x_2>x_0$ such that
the vertical lines through them intersect the two sectors.

\begin{figure}[!ht]
\begin{center}
\includegraphics[scale=.8]{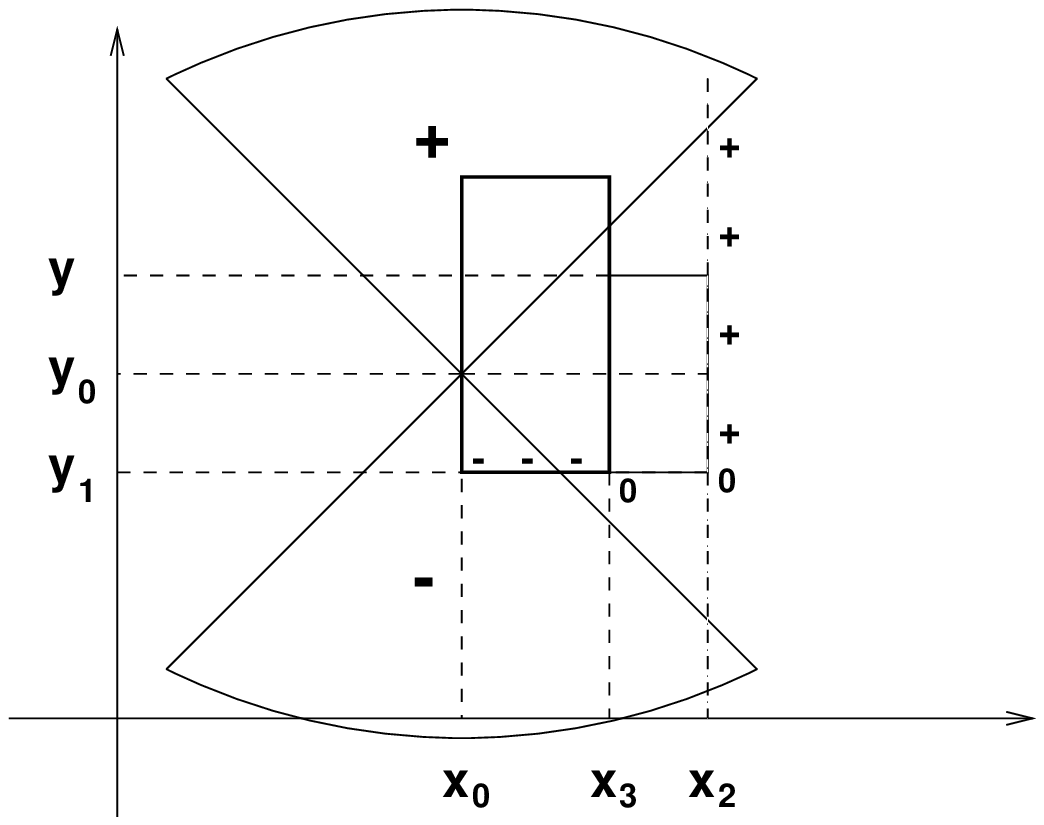}
\caption{}\label{fig:harom}
\end{center}
\end{figure}

First we construct the rectangle on the right hand side. (See Fig. \ref{fig:harom}.)
If (i) holds in
Lemma \ref{Tamas} for the point $(x_0,y_0)$ and for $x_2$ then we are done,
so as the other two cases are similar we assume that (ii) holds. This means
that even for the maximum $y_1$ of the zero set of $H$ between the two
sectors on this vertical line, $y_1<y_0$ holds. 
We choose the leftmost point $(x_3,y_1)$ of the zero set on the segment 
$[x_0,x_2]\times\{y_1\}$ as the lower right corner of the rectangle and we
choose the upper right corner from the upper sector.
Clearly there are only negative values on the bottom edge, so
what remains to show is that all values are positive above $(x_3,y_1)$ on
the vertical line until it reaches the upper sector. If $x_3=x_2$ then we
are done, otherwise this is an easy consequence of Lemma \ref{teglalap}
once we apply it to $x_3,x_2,y_1$ and any number $y_2>y_1$ such that
$(x_3,y_2)$ is still between the two sectors. 

\begin{figure}[!ht]
\begin{center}
\includegraphics[scale=.8]{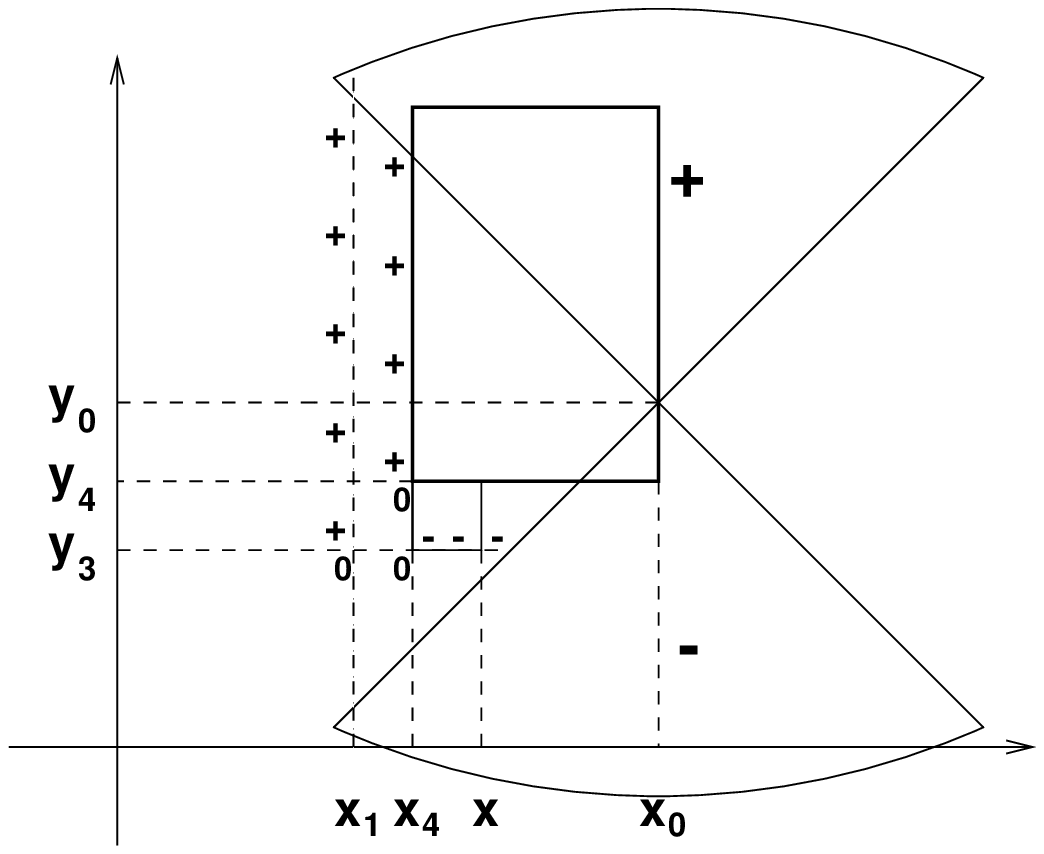}
\caption{}\label{fig:negy}
\end{center}
\end{figure}

Now we turn to the left hand side. (See Fig. \ref{fig:negy}.)
We assume again that (ii) of
Lemma~\ref{Tamas} holds for
$x_1$, and choose a maximal $y_3$ such that $(x_1,y_3)$ is between the two
sectors and $h(x_1,y_3)=0$. 
Note that, by (ii), we have $y_3<y_0$. 
Let $(x_4,y_3)$ be the rightmost point of the zero set of $H$ on the segment
$[x_1,x_0]\times\{y_3\}$. 
Let us now define the lower left corner $(x_4,y_4)$
of the rectangle as the point of maximal $y$ coordinate on the vertical
line $x=x_4$ between the sectors, where $H$ vanishes. 
By Lemma \ref{Tamas}, $y_4<y_0$
and clearly all values on the vertical half-line are positive until it
reaches the upper sector. Thus we only have to show that $H(x,y_4)<0$ for
all $x_4<x<x_0$, which easily follows from Lemma \ref{teglalap} once we
apply it to $x_4,x,y_3$ and $y_4$. 
\end{proof}

\begin{rem}
Theorem~\ref{non-branching} is not true for every $\R^2\to\R$ 
differentiable function with nowhere vanishing gradient. For an example
see \cite{Bu}.
\end{rem}

\end{document}